\providecommand{\U}[1]{\protect\rule{.1in}{.1in}}
\tikzset{>=Triangle}
\newtheorem{theorem}{Theorem}[section]
\newtheorem{proposition}[theorem]{Proposition}
\newtheorem{corollary}[theorem]{Corollary}
\newtheorem{lemma}[theorem]{Lemma}
\newtheorem{definition}[theorem]{Definition}
\numberwithin{equation}{section}
\pgfplotsset{compat=1.17}
\begin{document}
\title[Complements of Non-Minimal Subspaces: Characterization Results]{Complements of Non-Minimal Subspaces: Characterization Results}
\author[G. Ribeiro]{Geivison Ribeiro}
\address{Departamento de Matem\'{a}tica \\
Universidade Federal da Para\'{\i}ba \\
58.051-900 - Jo\~{a}o Pessoa, Brazil.}
\email{geivison.ribeiro@academico.ufpb.br}
\thanks{G. Ribeiro is supported by Grant 2022/1962, Para\'{\i}ba State Research
Foundation (FAPESQ)}
\subjclass[2020]{15A03, 46B87, 46A16}
\keywords{Lineability, Spaceability, $F$-spaces, Banach Spaces, Basic Sequences }

\begin{abstract}
Inspired by the work of L. Drewnowski in [Studia Math. \textbf{77} (1984)
373--391], our research reveals new insights and characterizes the notion of
spaceability in the context of complements of subspaces (not necessarily
closed) within the universe of $F$-spaces in terms of $\left[  \mathcal{S}%
\right]  $-lineability.

\end{abstract}
\maketitle

\section{Introduction and Background}

The investigation into the existence of infinite-dimensional closed subspaces
devoid of a \emph{basic sequence} within the context of $F$-spaces made
significant strides with the work of N. Kalton in \cite{Kalton}, who
demonstrated the following result:

\begin{theorem}
\cite[Theorem 1.1]{Kalton}There is a quasi-Banach space $E$ with a
one-dimensional subspace $L$ so that

\begin{enumerate}
\item[$\left(  1\right)  $] if $E_{0}$ is a closed infinite-dimensional
subspace of $E$ then $L\subset E_{0}$
\end{enumerate}

and

\begin{enumerate}
\item[$\left(  2\right)  $] $E/L$ is isomorphic to the Banach space $\ell_{1}$.
\end{enumerate}

In particular, $E$ contains no basic sequence and is minimal.
\end{theorem}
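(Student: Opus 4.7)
The plan is to realize $E$ as a twisted sum of the scalar field $\mathbb{K}$ with $\ell_1$. Given a quasi-linear map $F \colon \ell_1 \to \mathbb{K}$ --- i.e., a homogeneous map satisfying
\[
|F(x+y) - F(x) - F(y)| \leq C(\|x\|_1 + \|y\|_1)
\]
for some constant $C$ --- equip $\mathbb{K} \times \ell_1$ with the quasi-norm $\|(a,x)\|_E := |a - F(x)| + \|x\|_1$, producing a quasi-Banach space $E := \mathbb{K} \oplus_F \ell_1$. Take $L := \mathbb{K} \times \{0\}$, a one-dimensional (hence closed) subspace. A direct computation of the quotient quasi-norm yields $\|[(a,x)]\|_{E/L} = \inf_{b \in \mathbb{K}} \bigl(|a+b - F(x)| + \|x\|_1\bigr) = \|x\|_1$, so the isomorphism $E/L \cong \ell_1$ in part (2) is built into the construction.

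For part (1), the strategy is to choose $F$ to be \emph{singular}, in the sense that for every infinite-dimensional closed subspace $N \subset \ell_1$ and every linear map $g \colon N \to \mathbb{K}$, the estimate $|F(x) - g(x)| \leq C\|x\|_1$ fails on $N$. Granting such an $F$, suppose $M$ is a closed infinite-dimensional subspace of $E$ with $L \not\subset M$; since $\dim L = 1$, this forces $M \cap L = \{0\}$. Because $L$ is finite-dimensional, $M + L$ is closed in $E$, and hence $N := \pi(M) = \pi(M+L)$ is a closed infinite-dimensional subspace of $E/L \cong \ell_1$, where $\pi$ denotes the quotient map. The restriction $\pi|_M \colon M \to N$ is a continuous linear bijection between $F$-spaces, so the open mapping theorem yields a continuous inverse, necessarily of the form $x \mapsto (g(x), x)$ with $g \colon N \to \mathbb{K}$ linear. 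Unpacking the continuity of this inverse gives $|g(x) - F(x)| \leq K\|x\|_1$ on $N$, contradicting the singularity of $F$.

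The main obstacle, and the technical core, is producing such a singular quasi-linear $F$. I would build it on the canonical basis $(e_n)$ of $\ell_1$ by prescribing $F$ on finitely supported vectors through a Kalton--Peck-style nonlinear recipe --- a homogeneous scalar function built from coordinatewise expressions such as $x_n \log|x_n|$ --- and then extending by density using the quasi-linearity bound. The two requirements on $F$ pull in opposite directions: quasi-linearity caps the additive defect (to be checked by combinatorial estimates on the coefficients), while singularity demands that this defect be wild enough to preclude any linear approximation. The final ingredient is a Bessaga--Pe\l{}czy\'{n}ski gliding-hump reduction, which transfers the singularity test from arbitrary closed infinite-dimensional subspaces of $\ell_1$ to block subspaces, where the prescribed behaviour of $F$ forbids linear approximation by design.
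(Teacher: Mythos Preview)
The paper you are working from does not contain a proof of this theorem; it is quoted verbatim as \cite[Theorem~1.1]{Kalton} in the introduction purely as background, with no argument supplied. So there is no ``paper's own proof'' to compare against.

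That said, your outline is the correct architecture and is essentially Kalton's original strategy: realize $E$ as a twisted sum $\mathbb{K}\oplus_{F}\ell_{1}$ via a quasi-linear map $F$, so that $E/L\cong\ell_{1}$ is automatic, and then reduce property~(1) to the \emph{singularity} of $F$ (no linear approximation on any infinite-dimensional closed subspace). Your deduction of (1) from singularity is clean and correct, including the use of the open mapping theorem to invert $\pi|_{M}$ and extract the forbidden linear approximant.

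One caution on the construction of $F$ itself. The Kalton--Peck recipe $x_{n}\log|x_{n}|$ works on $\ell_{p}$ for $1<p<\infty$ but does not directly give what you need on $\ell_{1}$; the $\ell_{1}$ case in Kalton's 1995 paper requires a substantially more delicate construction (this is precisely why the basic sequence problem remained open for so long). Your plan to reduce the singularity check to block subspaces via Bessaga--Pe\l{}czy\'{n}ski is the right move, but the actual design of $F$ so that it is simultaneously quasi-linear and resists linear approximation on every block subspace of $\ell_{1}$ is the genuine content of Kalton's theorem, and your sketch underestimates it. If you intend to write this up in full you will need to consult Kalton's original construction rather than adapt Kalton--Peck.
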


In addition to revealing that the set $(E\setminus L)\cup\left\{  0\right\}  $
does not contain an infinite-dimensional closed subspace, this result is
particularly significant when contrasted with the classical result proved by
Enflo \cite{Enflo} and provided by Banach \cite[p. 238]{Banach}, which states
that \emph{every Banach space contains an infinite-dimensional closed subspace
with a basic sequence}. While the classical theorem guarantees the presence of
such subspaces in Banach spaces, Kalton's result reveals the additional
complexity and richness of linear structures in $F$-spaces.

This distinction not only underscores the fundamental difference between
$F$-spaces and Banach spaces but also highlights the diversity of possible
linear configurations in $F$-space environments.

According to \cite{KShapiro}, an $F$-space $E$ contains a basic sequence if
and only if there exists a strictly weaker Hausdorff vector topology on $E$.
Thus, the existence of a space without a basic sequence is equivalent to the
existence of a (topologically) minimal space (i.e., one in which no strictly
weaker Hausdorff vector topology exists).

Among the positive results regarding the existence of closed subspaces
containing basic sequences, one notable contribution is the work by Bastero in
\cite{Bastero}, which provides information that every subspace of $L_{p}[0,1]$
with $0<p<1$, contains a basic sequence, and a result due to L. Drewnowski in
\cite{Drew}, as follows:

\begin{theorem}
\label{TheoDrew copy(1)}$\left(  \text{\cite[Theorem 3.3]{Drew}}\right)  $For
an $F$-space $E$, if $F$ is a closed subspace non-minimal of $E$ and
$\dim\left(  E/F\right)  =\infty$, then there exists a closed non-minimal
subspace $G$ of $E$ such that $F\cap G=\left\{  0\right\}  $ and $F+G$ is not closed.
\end{theorem}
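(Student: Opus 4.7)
The plan is to build $G$ as the closed linear span of a small twist $z_n := x_n + y_n$ of a basic sequence $(x_n) \subset F$, where $(y_n) \subset E$ is a carefully chosen sequence of lifts of independent classes in $E/F$. By non-minimality of $F$ together with the Kalton--Shapiro characterization already recalled in the excerpt, $F$ contains a basic sequence $(x_n)$; after rescaling I may assume $\|x_n\| \geq 1$ for all $n$. Since $\dim(E/F) = \infty$, I select $\bar{y}_n \in E/F$ that are linearly independent and, by an inductive refinement, \emph{$\omega$-independent} (so every convergent series $\sum a_n \bar{y}_n = 0$ forces $a_n = 0$ for all $n$). I then lift to $y_n \in E$ with $\|y_n\|$ decaying so rapidly that the standard small-perturbation stability for basic sequences yields that $(z_n)$ is basic in $E$ and equivalent to $(x_n)$; I set $G := \overline{\mathrm{span}}(z_n)$.

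This immediately makes $G$ a closed non-minimal subspace. For $F \cap G = \{0\}$: any $g \in F \cap G$ admits a unique expansion $g = \sum b_n z_n$ via the basis $(z_n)$, and applying the quotient map $\pi: E \to E/F$ gives $0 = \pi(g) = \sum b_n \bar{y}_n$; $\omega$-independence then forces $b_n = 0$ for all $n$, hence $g = 0$. For non-closedness of $F + G$: set $f_n := -x_n \in F$ and $g_n := z_n \in G$, so that $f_n + g_n = y_n \to 0$ in $E$ while $\|f_n\| \geq 1$. If $F + G$ were closed, it would be an $F$-space, and the continuous linear bijection $F \times G \to F + G$, $(f, g) \mapsto f + g$ (well-defined by $F \cap G = \{0\}$), would be a topological isomorphism by the open mapping theorem, forcing $(f_n, g_n) \to (0,0)$ and contradicting $\|f_n\| \geq 1$.

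The delicate ingredient is the production of the $\omega$-independent classes $(\bar{y}_n)$ in $E/F$. Basic sequences in $E/F$ would supply them for free, but $E/F$ may itself be minimal, in which case no basic sequence is available; $\omega$-independence must then be enforced by combining a stepwise algebraic-independence construction with strong norm control, so that no convergent tail series can accidentally collapse onto a previously chosen class. Once this ingredient is in place, the remaining verifications reduce to standard applications of basic-sequence perturbation stability and the open mapping theorem for $F$-spaces.
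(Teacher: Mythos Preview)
The paper does not itself prove this statement: it is quoted twice (as Theorem~\ref{TheoDrew copy(1)} and Theorem~\ref{TheoDrew}) from Drewnowski \cite[Theorem~3.3]{Drew}, and only the underlying technique resurfaces in the proof of Theorem~\ref{Teo3.6}. That said, your outline is very much in Drewnowski's spirit and matches the machinery visible in Theorem~\ref{Teo3.6}: take a regular basic sequence $(x_n)$ inside $F$, perturb it by small vectors $y_n$ whose classes in $E/F$ are topologically linearly independent, and observe that the perturbation operator $\sum a_n x_n\mapsto\sum a_n y_n$ is compact, so that the range of $\mathcal{I}+K$ is closed by Lemma~\ref{Lemadacompacidade}. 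Your ``small-perturbation stability for basic sequences'' is exactly this compact-operator fact in other clothing, and your open-mapping argument for the non-closedness of $F+G$ is correct and clean.

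There is, however, a genuine gap precisely where you flag one. Your proposed mechanism for producing $\omega$-independent classes in $E/F$ --- ``a stepwise algebraic-independence construction with strong norm control, so that no convergent tail series can accidentally collapse onto a previously chosen class'' --- does not work as described. At stage $n+1$ the later terms $\bar y_{n+2},\bar y_{n+3},\dots$ have not yet been chosen, so you cannot preclude tail relations then; and after the fact, making $\|\bar y_n\|$ small does nothing to bound coefficients in a vanishing series $\sum a_k\bar y_k=0$, since an $F$-space quotient may have trivial dual and you have no functional to separate $\operatorname{span}(\bar y_1,\dots,\bar y_n)$ from the tail. The existence of a topologically linearly independent sequence in an arbitrary infinite-dimensional $F$-space is itself a theorem of Drewnowski in \cite{Drew}, with a more delicate proof; you should invoke it explicitly rather than improvise. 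A smaller point: $F$-norms are not positively homogeneous, so ``after rescaling I may assume $\|x_n\|\ge 1$'' is not available in general --- use directly that Kalton--Shapiro provides a \emph{regular} basic sequence, which is by definition bounded away from zero.
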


To provide context, let us fix some notations and terminology. We denote
$\mathbb{N}$ as the set of positive integers, $\mathbb{R}$ as the real scalar
field, and $\mathbb{C}$ as the complex scalar field. Unless explicitly stated
otherwise, all linear spaces are over $\mathbb{K}=\mathbb{R}\text{ or
}\mathbb{C}$. The term subspace will be used instead of vector subspace.
Additionally, $\alpha$ and $\beta$ represent cardinal numbers, $\text{card}%
(A)$ denotes the cardinality of set $A$, $\aleph_{0}:=\text{card}(\mathbb{N}%
)$, and $\mathfrak{c}:=\text{card}(\mathbb{R})$. Furthermore, if $E$ is a
vector space and $W$ is a subspace of $E$, we denote the algebraic codimension
of $W$ by $\dim(E/W)$.

In terms of notions, assume that $E$ is a vector space and let $\beta\leq
\dim(E)$ be a cardinal number. A subset $A \subset E$, is said to be:

\begin{itemize}
\item \textit{lineable} if there is an infinite dimensional subspace $F$ of
$E$ such that $F\smallsetminus\{0\}\subset A$, and

\item \textit{$\beta$-lineable} if there exists a subspace $F_{\beta}$ of $E$
with $\dim(F_{\beta})=\beta$ and $F_{\beta}\smallsetminus\{0\}\subset A$ (in
this context, a set is \emph{lineable} if it is $\aleph_{0}$-lineable).
\end{itemize}

This concepts was first introduced in the seminal paper \cite{AGSS} by Aron,
Gurariy and Seoane-Sep\'{u}lveda and, later, in \cite{Enflo, Gurariy-Quarta}.
Its essence is to investigate linear structures within exotic settings. A more
detailed description of the concept of lineability/spaceability can also be
found in \cite{Aron, Bernal, Botelho, Cariello, Enflo, Fonf}.

Moreover, if we consider a another cardinal number $\alpha$, with $\alpha
\leq\beta$, then $A$ is said to be:

\begin{itemize}
\item \textit{$(\alpha,\beta)$-lineable} if it is $\alpha$-lineable and for
every subspace $F_{\alpha}\subset E$ with $F_{\alpha}\subset A\cup\{0\}$ and
$\dim(F_{\alpha})=\alpha$, there is a subspace $F_{\beta}\subset E$ with
$\dim(F_{\beta})=\beta$ and
\begin{equation}
\label{ab}F_{\alpha}\subset F_{\beta}\subset A\cup\{0\}
\end{equation}
(hence $(0,\beta)$-lineability $\Leftrightarrow\ \beta$-lineability).
\end{itemize}

If $E$ is, in addition, is a topological vector space, then $A$ is called:

\begin{itemize}
\item \textit{$\beta$-spaceable} if $A\cup\{0\}$ contains a closed $\beta
$-dimensional subspace of $E$.

\item Moreover, if the subspace $F_{\beta}$ satisfying \eqref{ab} can always
be chosen closed, we say that $A$ is \textit{$(\alpha,\beta)$-spaceable} (see
\cite{FPT}).
\end{itemize}

In the quest for a deeper understanding of dimensional relationships within
spaceable sets, F\'{a}varo, Pellegrino, and Tomaz introduced this concept in
2019 (referenced as \cite{FPT}). This heightened conceptualization, although
rooted in conventional spaceability, demands meticulous consideration,
encompassing both geometric and topological properties within the set, as well
as relationships between the dimensions of subspaces therein. It is worth
noting that while conventional spaceability implies the variant known as
$\left(  \alpha,\alpha\right)  $-spaceability, the reverse assertion is not
always valid, as demonstred by Ara\'{u}jo et al$.$ in \cite{Araujo}. For a
more comprehensive exposition on this notion, we refer the reader to
\cite{Mikaela, Nacib, Gustavo, BRSH, Sheldon, Diogo/Anselmo, Diogo, Pilar,
FPT, Pellegrino}.

Connected to the notion of lineability, for a subspace $\mathcal{S} $ of
$\mathbb{K}^{\mathbb{N}} $, we say that a subset $A $ of a Hausdorff
topological vector space $X $ is $[ (u_{n})_{n=1}^{\infty}, \mathcal{S} ]
$-lineable in $X $ if, for each sequence $(c_{n})_{n=1}^{\infty}
\in\mathcal{S} $, the series $\sum_{n=1}^{\infty} c_{n} u_{n} $ converges in
$X $ to a vector in $A \cup\{ 0 \} $. Moreover, $A $ is $[ \mathcal{S} ]
$-lineable in $X $ if it is $[ (u_{n})_{n=1}^{\infty}, \mathcal{S} ]
$-lineable for some sequence $(u_{n})_{n=1}^{\infty} $ of linearly independent
elements in $X $.

This concept was introduced in \cite{S lineability} by Bernal-Gonz\'{a}lez et
al$.$ and originally coined by V. Gurariy and R. Aron during a Non-Linear
Analysis Seminar at Kent State University. As far as we know, this notion was
initially inspired by a result of Levine and Milman from 1940 \cite{Levine e
Milman}, created to address the lack of such a concept regarding convergence.
Regarding the result of Levine and Milman, they demonstrate that the subset of
$C[0,1]$ consisting of all functions of bounded variation does not contain a
closed subspace of infinite dimension.

\bigskip The results established in this paper are:

\begin{itemize}
\item \textbf{(Main Result)} For an $F$-space $E$, if $W$ is a subspace of $E$
that contains a non-minimal closed subspace, then the set $E \setminus W$ is
spaceable if and only if it is $[ (x_{n})_{n=1}^{\infty}, \mathcal{S}]
$-lineable for some closed subspace $\mathcal{S}$ of $\ell_{\infty}$
containing $c_{0}$ and some $\mathcal{S}$-independent sequence $(x_{n}%
)_{n=1}^{\infty}$ of elements of $E$.\newline

\item For a Banach space $E$, if $F$ is a closed subspace of $E$, then the
following conditions are equivalent:

\begin{enumerate}
\item $E \setminus F$ is $[ (x_{n})_{n=1}^{\infty}, \mathcal{S}] $-lineable
for some closed subspace $\mathcal{S}$ of $\ell_{\infty}$ containing $c_{0}$
and some $\mathcal{S}$-independent sequence $(x_{n})_{n=1}^{\infty}$ of
elements of $E$,

\item $E \setminus F$ is spaceable,

\item $F$ has infinite codimension.\newline
\end{enumerate}

\item For an $F$-space $E$, if $W$ is a subspace of $E$ that contains a
non-minimal closed subspace, then the following conditions are equivalent:

\begin{enumerate}
\item $E \setminus W$ is spaceable,

\item $E \setminus W$ is $[ (x_{n})_{n=1}^{\infty}, \mathcal{S}] $-lineable
for some closed subspace $\mathcal{S}$ of $\ell_{\infty}$ containing $c_{0}$
and some $\mathcal{S}$-independent sequence $(x_{n})_{n=1}^{\infty}$ of
elements of $E$,

\item $E \setminus W$ is $\left(  n, \mathfrak{c} \right)  $-spaceable for
each $n \in\mathbb{N}$.
\end{enumerate}
\end{itemize}

\section{Preliminaries\label{section 2}}

In this paper, we will characterize some families of complements of subspaces
(not necessarily closed) through the notion of $\left[  \mathcal{S}\right]
$-lineability. To this end, let us begin with some preliminary notions and results.

\begin{definition}
\cite[p. 376]{Drew} We say tha a sequence $\left(  u_{n}\right)
_{n=1}^{\infty}$ of elements of a topological vector space $E$ is
topologically linearly independent, if for each sequence $\left(
c_{n}\right)  _{n=1}^{\infty}\in\mathbb{K}^{\mathbb{N}}$ with $\sum
_{n=1}^{\infty}c_{n}u_{n}=0$, we have $\left(  c_{n}\right)  _{n=1}^{\infty
}=0$.
\end{definition}

Based on this definition, if $\mathcal{S}\neq\left\{  0\right\}  $ is a
subspace of $\mathbb{K}^{\mathbb{N}}$ then we will say that a sequence
$\left(  u_{n}\right)  _{n=1}^{\infty}$ of elements of a topological vector
space $E$ is $\mathcal{S}$-topologically linearly independent in\textbf{ }$E$
(or $\mathcal{S}$-independent) if for each sequence $\left(  c_{n}\right)
_{n=1}^{\infty}\in\mathcal{S}$ with $\sum_{n=1}^{\infty}c_{n}u_{n}=0$, we have
$\left(  c_{n}\right)  _{n=1}^{\infty}=0$.

\bigskip

In connection with this notion, we present the following classic definition.

\begin{definition}
A sequence $\left(  e_{k}\right)  _{k=1}^{\infty}$ in a metrizable topological
vector space $E$ is said to be basic whenever every $x\in\operatorname*{span}%
\left\{  e_{k}\right\}  _{k=1}^{\infty}$, the closed linear span of the
$x_{j}$'s, can be uniquely represented as a convergent series $x=\sum
_{k=1}^{\infty}a_{k}e_{k}$. Furthermore, as in \cite{Kalton}, a basic sequence
is called \emph{regular} if it is bounded away from zero, that is, if it lies
entirely outside some neighborhood of zero.
\end{definition}

From the perspective of minimal subspaces, as discussed in the introduction,
Kalton and Shapiro in \cite{KShapiro} proved the following result:

\begin{proposition}
If $E$ is an $F$-space, then the following are equivalent:

\begin{enumerate}
\item[$\left(  1\right)  $] $E$ is non-minimal,

\item[$\left(  2\right)  $] $E$ contains a regular basic sequence.
\end{enumerate}
\end{proposition}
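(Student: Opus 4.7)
The plan is to prove the equivalence by handling each implication separately, with the direction from non-minimality to the existence of a regular basic sequence being the substantial one.

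For $(1)\Rightarrow(2)$, I would unpack the definition of non-minimality to obtain a Hausdorff vector topology $\tau$ on $E$ strictly coarser than the original $F$-space topology $\tau_{E}$. Since the identity $(E,\tau_{E})\to(E,\tau)$ is continuous but not a homeomorphism, some $\tau_{E}$-neighborhood $U$ of $0$ fails to contain any $\tau$-neighborhood of $0$. Using this together with a countable $\tau_{E}$-base of neighborhoods at $0$, I would construct a sequence $(y_{n})\subset E\setminus U$ that converges to $0$ in $\tau$; the fact that every term lies outside $U$ is exactly the regularity condition in $\tau_{E}$. A standard Bessaga--Pe\l{}czy\'{n}ski-type gliding hump argument, valid in metrizable TVS, then extracts a basic subsequence of $(y_{n})$, and this subsequence remains bounded away from zero in $\tau_{E}$, giving a regular basic sequence.

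For $(2)\Rightarrow(1)$, I would begin with a regular basic sequence $(e_{k})$ with closed linear span $F$ and associated coefficient functionals $e_{k}^{\ast}\in F^{\ast}$. These functionals are $\tau_{E}|_{F}$-continuous and separate points of $F$, so the initial topology $\tau^{\prime}$ on $F$ they induce is Hausdorff. In $\tau^{\prime}$ one has $e_{k}\to 0$ because $e_{j}^{\ast}(e_{k})=\delta_{jk}\to 0$ as $k\to\infty$ for each fixed $j$, while regularity forces $e_{k}\not\to 0$ in $\tau_{E}$; hence $\tau^{\prime}\subsetneq\tau_{E}|_{F}$. It then remains to glue $\tau^{\prime}$ on $F$ with the quotient topology on $E/F$, via the short exact sequence $0\to F\to E\to E/F\to 0$, into a Hausdorff vector topology on $E$ strictly weaker than $\tau_{E}$.

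The main obstacle is this last gluing step. Since $F$ is not in general complemented in $E$ and continuous linear functionals on $F$ need not extend to $E$ (Hahn--Banach failing in general $F$-spaces), the weaker topology on $E$ must be described intrinsically: I would take it to be generated by the family of $\tau_{E}$-continuous seminorms whose restrictions to $F$ are $\tau^{\prime}$-continuous, and then verify that the resulting topology is Hausdorff, compatible with the vector space structure, and strictly coarser than $\tau_{E}$.
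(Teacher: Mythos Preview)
The paper does not prove this proposition; it is quoted from Kalton--Shapiro \cite{KShapiro} as a background result, so there is no argument in the paper to compare against. Assessing your sketch on its own: the outline for $(1)\Rightarrow(2)$ is essentially the Kalton--Shapiro route and is sound, though the extraction of the basic subsequence is more accurately a Mazur-type construction (exploiting that $\tau$ and $\tau_{E}$ coincide on every finite-dimensional subspace) than a gliding hump.

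For $(2)\Rightarrow(1)$ there is a genuine gap exactly where you anticipate one, and your proposed remedy does not close it. Generating a topology on $E$ from the $\tau_{E}$-continuous \emph{seminorms} whose restrictions to $F$ are $\tau'$-continuous cannot work in a general $F$-space: $L_{p}[0,1]$ with $0<p<1$ carries no nonzero continuous seminorm at all, yet it contains regular basic sequences and is non-minimal; in such spaces your family of seminorms is $\{0\}$ and the induced topology is indiscrete rather than Hausdorff. A construction that does succeed, avoiding seminorms and Hahn--Banach entirely, is the amalgamation: set $\Delta=\{(f,-f):f\in F\}\subset E\oplus (F,\tau')$, verify that $\Delta$ is closed (using only that $F$ is $\tau_{E}$-closed in $E$ and that $\tau'\subset\tau_{E}|_{F}$), and map $E$ into the Hausdorff quotient $\bigl(E\oplus(F,\tau')\bigr)/\Delta$ via $x\mapsto[(x,0)]$. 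This is a continuous linear injection but not an embedding, because $[(e_{k},0)]=[(0,e_{k})]\to 0$ in the quotient while regularity keeps $e_{k}$ away from $0$ in $\tau_{E}$; the initial topology it induces on $E$ is the required strictly weaker Hausdorff vector topology.
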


\bigskip

The following Lemma is crucial for the next results.

\begin{lemma}
\label{Lemadacompacidade}\cite[Lemma 2]{Kalton 2} For $F$-spaces $Z$ and $E$,
if $K\colon Z\rightarrow E$ is a compact operator and $\mathcal{I}\colon
Z\rightarrow E$ is the identity embendding then the operator $\mathcal{I}%
+K\colon Z\rightarrow E$ has closed range.
\end{lemma}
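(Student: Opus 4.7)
The plan is to adapt the classical Riesz--Fredholm theory for compact perturbations of the identity to the $F$-space setting. Writing $T = \mathcal{I} + K$, I would first show that $N := \ker T$ is finite-dimensional and then that the induced map $\hat T : Z/N \to E$ is a topological isomorphism onto its image; the second statement forces $T(Z) = \hat T(Z/N)$ to be homeomorphic to the complete $F$-space $Z/N$, hence complete, and therefore closed in the Hausdorff topological vector space $E$.

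For the finite-dimensionality of $N$, I would exploit compactness of $K$: there is a neighborhood $V$ of $0$ in $Z$ with $\overline{K(V)}$ compact in $E$. Any $x \in V \cap N$ satisfies $x = -Kx \in -K(V)$, so $V \cap N$ is relatively compact in $E$ and hence in $N$ with its subspace topology. Thus $N$ is a locally compact Hausdorff topological vector space, and the Riesz theorem yields $\dim N < \infty$. In particular $N$ is closed, so $\hat Z := Z/N$ is again an $F$-space, and $T$ descends to a continuous injection $\hat T : \hat Z \to E$ whose image coincides with $T(Z)$.

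To prove $\hat T$ is a topological embedding, I would suppose $\hat T \hat x_n \to 0$ in $E$ and aim to deduce $\hat x_n \to 0$ in $\hat Z$. Since $N$ is finite-dimensional, I can select representatives $x_n \in Z$ of $\hat x_n$ with invariant-metric distance $d(x_n,0)$ within $1/n$ of the quotient distance $d(\hat x_n,0)$. If $(x_n)$ is TVS-bounded in $Z$, then $(x_n) \subset tV$ for some $t > 0$, so $K x_n \in tK(V)$ admits a convergent subsequence $K x_{n_k} \to w$ in $E$; combining this with $x_{n_k} = T x_{n_k} - K x_{n_k}$ and $T x_{n_k} \to 0$ identifies the candidate limit $-w$ for $x_{n_k}$, which is forced into $N$ by $T(-w) = 0$, so $\hat x_{n_k} \to 0$ in $\hat Z$. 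A standard subsequence-of-subsequence argument then yields $\hat x_n \to 0$.

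The main obstacle is the case where the representatives $(x_n)$ fail to be TVS-bounded, which can occur in an $F$-space even when $d(\hat x_n,0)$ is bounded, since metric-boundedness need not coincide with boundedness in the topological vector space sense (witness $L^0$-type spaces). Resolving this requires a scaling argument: choose scalars $\lambda_n$ so that $\lambda_n x_n$ is TVS-bounded yet stays bounded away from $0$, run the compactness extraction on $\lambda_n x_n$ to produce a nonzero element of $N$, and contradict the near-minimality of the chosen representatives via a normalized Riesz-type lemma applied in the quotient. The absence of a norm on $Z$ is precisely the technical point distinguishing this from the textbook Banach-space proof and is the reason the result is credited to Kalton.
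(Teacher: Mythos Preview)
The paper does not supply its own proof of this lemma: it is quoted as \cite[Lemma~2]{Kalton 2} and invoked as a black box inside the proof of the main theorem, so there is no in-paper argument against which to compare your proposal. Your outline is the natural adaptation of Riesz--Fredholm theory to the $F$-space setting and you correctly isolate the one genuinely nontrivial step---handling representative sequences that are metrically bounded but not TVS-bounded---as the point where the argument diverges from the Banach-space textbook proof. One caution about your bounded-case sketch: from $Kx_{n_k}\to w$ and $Tx_{n_k}\to 0$ you obtain $x_{n_k}\to -w$ only in the topology of $E$, not a priori in that of $Z$, so concluding $\hat x_{n_k}\to 0$ in $Z/N$ still requires an additional step (typically one argues by contradiction, normalizing so that $d_{Z/N}(\hat x_n,0)$ is bounded away from $0$, and uses the compactness extraction together with finite-dimensionality of $N$ to force a contradiction). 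For the precise resolution of both this point and the unbounded case you would need to consult Kalton's original paper directly.
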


Based on these notions, Drewnowski in \cite{Drew} demonstrated the following result:

\begin{theorem}
\label{TheoDrew}$\left(  \text{\cite[Theorem 3.3]{Drew}}\right)  $ For an
$F$-space $E$, if $F$ is a closed subspace non-minimal of $E$ and $\dim\left(
E/F\right)  =\infty$, then there exists a closed non-minimal subspace
$\emph{F}$ of $E$ such that $F\cap\emph{F}=\left\{  0\right\}  $ and
$F+\emph{F}$ is not closed.
\end{theorem}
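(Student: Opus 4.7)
The plan is to construct $G$ as a small compact perturbation, inside $E$, of a regular basic sequence that lives in $F$, and then derive non-closedness of $F+G$ from the tension between the regularity of this basic sequence and the smallness of the perturbation.

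First I would apply the Kalton--Shapiro proposition to $F$ itself (which is an $F$-space, non-minimal by hypothesis) to obtain a regular basic sequence $(f_n)\subset F$; set $Z:=\overline{\operatorname{span}}\{f_n:n\geq 1\}$. Since $\dim(E/F)=\infty$, I would then pick $(y_n)\subset E$ whose cosets are linearly independent in $E/F$, rescaling so that $y_n\to 0$ in $E$ so rapidly that three things hold simultaneously: (i) the map $K\colon Z\to E$ determined by $K(f_n)=y_n$ is a well-defined compact operator; (ii) the perturbed sequence $g_n:=f_n+y_n$ is basic in $E$ and equivalent to $(f_n)$, so that $\mathcal{I}+K$ is an isomorphism of $Z$ onto its image; and (iii) the $(y_n)$ are topologically linearly independent modulo $F$, in the sense that every convergent series $\sum c_n y_n\in F$ forces $c_n=0$. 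Item (iii) requires an inductive choice, at each step separating $y_n$ from the closure of $F+\operatorname{span}\{y_1,\dots,y_{n-1}\}$; the smallness conditions (i) and (ii) alone do not automatically yield (iii).

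Set $G:=(\mathcal{I}+K)(Z)=\overline{\operatorname{span}}\{g_n:n\geq 1\}$. Lemma \ref{Lemadacompacidade} gives that $G$ is closed in $E$. Since $(g_n)$ is basic and regular (regularity follows from (ii) combined with $y_n\to 0$ via an $F$-norm triangle inequality), another application of Kalton--Shapiro shows that $G$ is non-minimal. To verify $F\cap G=\{0\}$, suppose $x=\sum c_n g_n\in F$; since $\mathcal{I}+K$ is an isomorphism $Z\to G$, the partial sums $\sum_{n\leq N}c_n f_n$ converge to some $z\in Z\subset F$, so $\sum c_n y_n=x-z\in F$, and item (iii) then forces every $c_n=0$, hence $x=0$.

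Finally, suppose toward contradiction that $F+G$ is closed. Then $F+G$ is itself an $F$-space, and since $F\cap G=\{0\}$, the open mapping theorem applied to the continuous bijection $F\times G\to F+G$, $(f,g)\mapsto f+g$, produces a continuous projection $\pi_G\colon F+G\to G$. From $\pi_G(f_n)=0$ and $\pi_G(g_n)=g_n$ I get $\pi_G(y_n)=\pi_G(g_n-f_n)=g_n$, so continuity of $\pi_G$ and $y_n\to 0$ force $g_n\to 0$, whence $f_n=g_n-y_n\to 0$, contradicting regularity of $(f_n)$. The main obstacle is orchestrating (i), (ii), (iii) simultaneously in the choice of $(y_n)$: since Hahn--Banach may fail in a general $F$-space, condition (iii) cannot be reached by a dual argument and must be produced by an inductive construction interleaving the decay estimates needed for compactness and perturbation with a separation step at each stage.
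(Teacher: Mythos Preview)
The paper does not prove this statement: Theorem~\ref{TheoDrew} is quoted from Drewnowski \cite[Theorem 3.3]{Drew} without proof, so there is no in-paper argument to compare against. That said, your compact-perturbation strategy is precisely the circle of ideas the paper itself deploys in the proof of its main Theorem~\ref{Teo3.6}: take a regular basic sequence inside the non-minimal subspace, build a compact operator by sending the basic vectors to a fast-decaying sequence chosen to be independent modulo the subspace, invoke Lemma~\ref{Lemadacompacidade} so that $(\mathcal{I}+K)$ has closed range, and then read off the required disjointness and non-closedness. Your items (i)--(iii) match the role played there by the operator $\mathcal{O}\circ\mathcal{L}$ and the $\ell_\infty$-independence of $(Q(x_n))$ in the quotient, and your final contradiction via the open mapping theorem and regularity is the standard way Drewnowski closes the argument. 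I do not see a gap; your warning that (iii) must be secured by an inductive separation step (not by duality, since Hahn--Banach may fail) is exactly right and is the one place where care is genuinely needed.
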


To the best of our knowledge, there is no criterion in the literature that
establishes spaceability for the complement of a subspace $W$ (not necessarily
closed) in an $F$-space $E$.

\section{The Main Result}

The next result is a characterization for spaceability in the context of
complements of subspaces that are not necessarily closed, which was inspired
by \ref{TheoDrew}.

\begin{theorem}
\label{Teo3.6} For an $F$-space $E$, if $W$ be a subspace of $E$ that contains
a non-minimal closed subspace, then the set $E\setminus W$ is spaceable if and
only if it is $[(x_{n})_{n=1}^{\infty},\mathcal{S}]$-lineable for some closed
subspace $\mathcal{S}$ of $\ell_{\infty}$ containing $c_{0}$ and some
$\mathcal{S}$-independent sequence $(x_{n})_{n=1}^{\infty}$ of elements of $E$.
\end{theorem}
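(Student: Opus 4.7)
The plan is to handle the two implications separately. For the $\Leftarrow$ direction, given $(x_n)$ and $\mathcal{S}$, I would introduce the summation map $T\colon\mathcal{S}\to E$ by $T((c_n))=\sum_n c_n x_n$. Because $\mathcal{S}$ is a closed subspace of $\ell_\infty$ (hence a Banach space) and the partial sums $S_N$ are continuous linear maps with $S_N\to T$ pointwise on $\mathcal{S}$, the Banach-Steinhaus theorem for $F$-spaces yields continuity of $T$, and the $\mathcal{S}$-independence hypothesis makes $T$ injective. The key technical step here, supported by the hypothesis $c_0\subset\mathcal{S}$ (which forces every finite combination of the $x_n$ to lie in $(E\setminus W)\cup\{0\}$), is to show that $T$ is a topological isomorphism onto its range via an open mapping/closed graph argument adapted to this setting. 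Once established, $T(\mathcal{S})=\overline{\operatorname{span}\{x_n:n\in\mathbb{N}\}}$ is a closed infinite-dimensional subspace of $E$ contained in $(E\setminus W)\cup\{0\}$, witnessing spaceability.

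For the $\Rightarrow$ direction, let $V\subset(E\setminus W)\cup\{0\}$ be closed infinite-dimensional, and let $F\subset W$ be closed and non-minimal. By the Kalton-Shapiro proposition quoted in the preliminaries, $F$ carries a regular basic sequence $(u_n)$, which after rescaling may be assumed to satisfy $\sum_n c_n u_n$ convergent in $E$ for every $c\in\ell_\infty$. I then carry out a Drewnowski-style perturbation: pick $(v_n)$ in $V$ with $F$-norms so small that the map $\sum c_n u_n\mapsto\sum c_n v_n$, defined on $\overline{\operatorname{span}(u_n)}$, is compact. By Lemma~\ref{Lemadacompacidade}, $\mathcal{I}+K$ has closed range, and this identifies the convergence domains of $(u_n)$ and $(x_n):=(u_n+v_n)$, so $(x_n)$ is a basic sequence in $E$ equivalent to $(u_n)$. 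Setting $\mathcal{S}:=\{c\in\ell_\infty:\sum_n c_n x_n\text{ converges in }E\}$ then gives a closed subspace of $\ell_\infty$ containing $c_0$, and $(x_n)$ is $\mathcal{S}$-independent by basicness.

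It remains to verify that $T(\mathcal{S})\cap W=\{0\}$, which is the heart of the argument and the step I expect to be the main obstacle. Given $c\in\mathcal{S}$ with $\sum_n c_n x_n\in W$, the equivalence of $(x_n)$ with $(u_n)$ forces $\sum_n c_n u_n$ to converge, and since this sum lies in $F\subset W$ we obtain $\sum_n c_n v_n=\sum_n c_n x_n-\sum_n c_n u_n\in W$; as $v_n\in V$ with $V$ closed, $\sum_n c_n v_n\in V\cap W=\{0\}$. To conclude $c=0$ one needs the sequence $(v_n)$ to be $\mathcal{S}$-independent, and this is the principal difficulty: when $V$ itself is non-minimal one picks $(v_n)$ to be a basic sequence in $V$ from the outset and independence is automatic, but in general the selection of $(v_n)$ must exploit simultaneously the non-minimality of $F$, the infinite dimension of $V$, and a further invocation of Lemma~\ref{Lemadacompacidade} to pin down the matching of convergence domains and secure the required independence.
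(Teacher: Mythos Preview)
You have essentially swapped the roles of the two implications, and this produces a genuine gap in the $\Leftarrow$ direction.

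For $\Leftarrow$ (lineable $\Rightarrow$ spaceable) your plan is to show directly that the summation operator $T\colon\mathcal S\to E$ is a topological isomorphism onto its range ``via an open mapping/closed graph argument''. But the open mapping theorem only applies once the range is already known to be closed, which is precisely the point at issue; closed graph gives continuity of $T$, which you already have from Banach--Steinhaus. Nothing in your argument forces $T(\mathcal S)$ to be closed, and indeed there is no reason it should be: an injective bounded operator from a Banach space into an $F$-space need not have closed range. Notice also that you never use the hypothesis that $W$ contains a non-minimal closed subspace in this direction---yet this is exactly where the paper uses it. The paper picks a regular basic sequence $(y_n)$ inside that non-minimal closed subspace of $W$, defines the coefficient map $\mathcal L\colon\overline{\operatorname{span}}\{y_n\}\to\mathcal S$ (well-defined because $(y_n)$ is regular and $c_0\subset\mathcal S$), observes that $\mathcal O\circ\mathcal L$ is compact, and then invokes Lemma~\ref{Lemadacompacidade} to conclude that $(\mathcal I+\mathcal O\circ\mathcal L)(\overline{\operatorname{span}}\{y_n\})$ is closed. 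The verification that this closed subspace meets $W$ trivially is then a short computation using $\overline{\operatorname{span}}\{y_n\}\subset W$ and $\mathcal O(\mathcal S)\subset (E\setminus W)\cup\{0\}$. In short, the compact-perturbation machinery you deploy belongs in \emph{this} direction, not the other one.

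Conversely, your $\Rightarrow$ direction (spaceable $\Rightarrow$ lineable) is far more elaborate than necessary, and the ``principal difficulty'' you flag is an artefact of that detour. The paper simply takes an $\ell_\infty$-independent sequence $(x_n)$ with $\sum_n\|x_n\|_E<\infty$ \emph{inside} the closed infinite-dimensional subspace $F\subset(E\setminus W)\cup\{0\}$ furnished by spaceability, and sets $\mathcal S=\ell_\infty$. Then $\sum_n c_n x_n\in F\subset(E\setminus W)\cup\{0\}$ for every $c\in\ell_\infty$, and $[(x_n),\ell_\infty]$-lineability is immediate. No perturbation, no basic sequence in $W$, no appeal to non-minimality is needed here.
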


\begin{proof}
Assume first that $E\setminus W$ is spaceable and let $F$ be an
infinite-dimensional closed subspace of $E$ such that%
\[
F\cap W=\left\{  0\right\}  \text{.}%
\]
Considering an $\ell_{\infty}$-independent sequence $(x_{n})_{n=1}^{\infty}$
of elements of $F$ such that
\[
\sum_{n=1}^{\infty}\Vert x_{n}\Vert_{E}<\infty\text{,}%
\]
let us show that $E\setminus W$ is $[(x_{n})_{n=1}^{\infty},\mathcal{S}%
]$-lineable, for $\mathcal{S}:=\ell_{\infty}$. For this, consider the
following linear operator:
\[
\mathcal{A}:\ell_{\infty}\rightarrow F,\quad(t_{n})_{n=1}^{\infty}\mapsto
\sum_{n=1}^{\infty}t_{n}x_{n}.
\]
Since $\sum_{n=1}^{\infty}\Vert x_{n}\Vert_{E}<\infty$, the operator
$\mathcal{A}$ is well-defined. Moreover, due to the fact that $(x_{n}%
)_{n=1}^{\infty}$ is $\ell_{\infty}$-independent, $\mathcal{A}$ is also
injective. This allows infer that%
\[
\mathcal{A}\left(  \ell_{\infty}\right)  \subseteq F\subseteq(E\setminus
W)\cup\{0\}\text{,}%
\]
showing that $E\setminus W$ is $[(x_{n})_{n=1}^{\infty},\mathcal{S}%
]$-lineable, as desired.

For the converse, assume that $E\setminus W$ is $[(x_{n})_{n=1}^{\infty
},\mathcal{S}]$-lineable for some closed subspace $\mathcal{S}$ of
$\ell_{\infty}$ containing $c_{0}$ and some $\mathcal{S}$-independent sequence
$(x_{n})_{n=1}^{\infty}$ of elements of $E$. Consider the following operator:
\begin{equation}
\mathcal{O}:\mathcal{S}\rightarrow E,\quad(t_{n})_{n=1}^{\infty}\mapsto
\sum_{n=1}^{\infty}t_{n}x_{n}.\label{TheoperatorO}%
\end{equation}
Given that $E\setminus W$ is $[(x_{n})_{n=1}^{\infty},\mathcal{S}]$-lineable,
we have:%
\begin{equation}
\mathcal{O}(\mathcal{S})\subseteq(E\setminus W)\cup\{0\}\text{.}%
\label{OScontidoemX}%
\end{equation}
Notice that%
\[
\dim\mathcal{O}(\mathcal{S})=\dim\mathcal{S}=\infty\text{ }(\mathcal{O}\text{
is injective}).
\]
Considering $\overline{\text{span}}\{y_{n}\}_{n=1}^{\infty}$ where $\left(
y_{n}\right)  _{n=1}^{\infty}$ is a regular basic sequence, $\left(  W\text{
\textbf{has a non-minimal closed subspace}}\right)  $ this allows infer that
the set $E\setminus\overline{\text{span}}\{y_{n}\}_{n=1}^{\infty}$ has
infinite-codimension. That is,%
\[
\dim\left(  E/\overline{\text{span}}\{y_{n}\}_{n=1}^{\infty}\right)
=\infty\text{.}%
\]

Without loss of generality, assume that $(x_{n})_{n=1}^{\infty}$ is such that%
\[
\sum_{n=1}^{\infty}\Vert x_{n}\Vert_{E}<\infty
\]
and%
\[
\sum_{n=1}^{\infty}a_{n}Q\left(  x_{n}\right)  =0\Rightarrow a_{n}=0\text{
whenever }\left(  a_{n}\right)  _{n=1}^{\infty}\in\ell_{\infty}%
\]
where $Q\colon E\rightarrow E/\overline{\text{span}}\{y_{n}\}_{n=1}^{\infty}$
is the quotient map.

Let:
\[
\mathcal{L}:\overline{\text{span}}\{y_{n}\}_{n=1}^{\infty}\rightarrow
\mathcal{S},\quad\mathcal{L}(y)=(f_{j}(y))_{j=1}^{\infty},
\]
where $f_{j}:\overline{\text{span}}\{y_{n}\}_{n=1}^{\infty}\rightarrow
\mathbb{K}$ is such that%
\[
f_{j}(y_{n})=\left\{
\begin{array}
[c]{ccc}%
0\text{,} & \text{if} & n\neq j\text{,}\\
1\text{,} & \text{if} & n=j\text{.}%
\end{array}
\right.
\]

The map $\mathcal{L}$ is well-defined since $(y_{n})_{n=1}^{\infty}$ is
regular and $c_{0}\subseteq\mathcal{S}$. Furthermore, is is continuous due to
the \textbf{Closed Graph Theorem}. Regarding the operator $\mathcal{O}$
considered in $($\ref{TheoperatorO}$)$, it is also continuous. To see this, it
suffices to consider the sequence of continuous mappings $(\mathcal{O}%
_{N})_{N=1}^{\infty}$ defined by
\[
\mathcal{O}_{N}:\ell_{\infty}\rightarrow E,\quad\mathcal{O}_{N}((t_{k}%
)_{k=1}^{\infty})=\sum_{k=1}^{N}t_{k}x_{k},
\]
and employ the version of the \textbf{Banach-Steinhaus} in \cite[Theorem
2.8]{Rudin}.

Fix $y=\sum_{n=1}^{\infty}a_{n}y_{n}$ in $\overline{\text{span}}%
\{y_{n}\}_{n=1}^{\infty}$. Due to the fact that
\[
(\mathcal{O}\circ\mathcal{L})\left(  y\right)  =(\mathcal{O}\circ
\mathcal{L})\left(  \sum_{n=1}^{\infty}a_{n}y_{n}\right)  =\mathcal{O}\left(
\sum_{n=1}^{\infty}a_{n}\mathcal{L}(y_{n})\right)  =\mathcal{O}((a_{n}%
)_{n=1}^{\infty})=\sum_{n=1}^{\infty}a_{n}x_{n}%
\]
and
\[
f_{j}\left(  y\right)  =f_{j}\left(  \sum_{n=1}^{\infty}a_{n}y_{n}\right)
=a_{j},\quad\text{for each }j\in\mathbb{N}%
\]
we obtain
\[
(\mathcal{O}\circ\mathcal{L})(y)=\sum_{n=1}^{\infty}f_{n}(y)x_{n}.
\]

Since $y$ is arbitrary, o operador $\mathcal{O}\circ\mathcal{L}$ is
well-defined and it is determined by sequences $\left(  f_{n}\right)
_{n=1}^{\infty}$ and $\left(  x_{n}\right)  _{n=1}^{\infty}$. Moreover,
according to the proof of Proposition 3.3 in \cite{D}, this is a operator
compact. Hence, if we consider the identity embedding $\mathcal{I}%
:\overline{\text{span}}\{y_{k}\}_{k=1}^{\infty}\rightarrow E$, and invoke the
Lemma \ref{Lemadacompacidade} , we conclude that the operator $\mathcal{I}%
+\mathcal{O}\circ\mathcal{L}$ is such that%
\[
\left(  \mathcal{I}+\mathcal{O}\circ\mathcal{L}\right)  \left(  \overline
{\text{span}}\{y_{k}\}_{k=1}^{\infty}\right)
\]
is closed.

Furthermore, since:

1. $(\mathcal{O}\circ\mathcal{L})(y)=0\Rightarrow\sum_{n=1}^{\infty}%
f_{n}(y)x_{n}=0$,

2. $(x_{n})_{n=1}^{\infty}$ is $\ell_{\infty}$-independent,

3. $(f_{n}(y))_{n=1}^{\infty}\in\mathcal{S}$, and

4. $\mathcal{S}\subseteq\mathcal{\ell}_{\infty}$,

we have:
\[
f_{n}(y)=0,\quad\text{for each }n\in\mathbb{N}.
\]
This allows us to conclude that
\[
y=0.
\]
Hence, $\mathcal{O}\circ\mathcal{L}$ is injective and consequently the
operator $\mathcal{I}+\mathcal{O}\circ\mathcal{L}$ is an isomorphism over its range.

We claim that
\[
(\mathcal{I}+\mathcal{O}\circ\mathcal{L})(\overline{\text{span}}%
\{y_{n}\}_{n=1}^{\infty})\cap W=\{0\}.
\]

Indeed, let $v\in(\mathcal{I}+\mathcal{O}\circ\mathcal{L})(\overline
{\text{span}}\{y_{n}\}_{n=1}^{\infty})\cap W$. Due to the fact that
$v\in(\mathcal{I}+\mathcal{O}\circ\mathcal{L})(\overline{\text{span}}%
\{y_{n}\}_{n=1}^{\infty})$, there exists $u$ in $\overline{\text{span}}%
\{y_{k}\}_{k=1}^{\infty}$ such that
\[
v=(\mathcal{I}+\mathcal{O}\circ\mathcal{L})(u).
\]
Since
\[
v=(\mathcal{I}+\mathcal{O}\circ\mathcal{L})(u)=u+(\mathcal{O}\circ
\mathcal{L})(u),
\]
we have
\[
(\mathcal{O}\circ\mathcal{L})(u)=v-u\in\left[  W+\overline{\text{span}}%
\{y_{n}\}_{n=1}^{\infty}\right]  \subseteq W.
\]
That is,%
\[
(\mathcal{O}\circ\mathcal{L})(u)\in W.
\]
On the other hand, due to the fact that
\[
(\mathcal{O}\circ\mathcal{L})(u)=\sum_{n=1}^{\infty}f_{n}(u)x_{n}%
\in\mathcal{O}(\mathcal{S}),
\]
we can invoke $\left(  \text{\ref{OScontidoemX}}\right)  $ and conclude that
\[
v-u=(\mathcal{O}\circ\mathcal{L})(u)=0.
\]
Therefore,
\[
v=u=0\text{ }\left(  \mathcal{O}\circ\mathcal{L}\text{ is injective}\right)
\text{,}%
\]
and consequently,%
\[
(\mathcal{I}+\mathcal{O}\circ\mathcal{L})(\overline{\text{span}}%
\{y_{n}\}_{n=1}^{\infty})\cap W=\{0\}
\]
as desired.

\bigskip

Finally, since $(\mathcal{I}+\mathcal{O}\circ\mathcal{L})(y_{n})=y_{n}%
+(\mathcal{O}\circ\mathcal{L})(y_{n})=y_{n}+x_{n}$, we have that the sequence
$(y_{n}+x_{n})_{n=1}^{\infty}$ is basic, and this allows us to conclude that%
\[
\dim(\mathcal{I}+\mathcal{O}\circ\mathcal{L})(\overline{\text{span}}%
\{y_{n}\}_{n=1}^{\infty})=\infty.
\]
Therefore, $E\setminus W$ is spaceable, completing the proof of this result.
\end{proof}

\section{Some Consequences}

\begin{corollary}
For a Banach space $E$, if $F$ be an infinite-dimensional closed subspace of
$E$, then the following conditions are equivalent:

\begin{enumerate}
\item $E\setminus F$ is $[(x_{n})_{n=1}^{\infty},\mathcal{S}]$-lineable for
some closed subspace $\mathcal{S}$ of $\ell_{\infty}$ containing $c_{0}$ and
some $\mathcal{S}$-independent sequence $(x_{n})_{n=1}^{\infty}$ of elements
of $E$,

\item $E\setminus F$ is spaceable,

\item $F$ has infinite codimension.
\end{enumerate}
\end{corollary}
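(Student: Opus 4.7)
My plan is to derive the corollary directly from Theorem \ref{Teo3.6} (the Main Theorem) together with Drewnowski's Theorem \ref{TheoDrew} (which is available by the hypothesis that $E$ is Banach). The crucial preliminary observation is that, by the Banach--Enflo classical theorem recalled in the introduction, every infinite-dimensional Banach space contains a basic sequence; equivalently, by the Kalton--Shapiro characterization in Proposition~2.3, every infinite-dimensional closed subspace of a Banach space is non-minimal. Consequently, the hypothesis of Theorem~\ref{Teo3.6} is automatically satisfied for $W = F$, since $F$ is itself a non-minimal closed subspace of $E$.

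With this observation in place, the implication (1)$\Leftrightarrow$(2) is immediate from Theorem~\ref{Teo3.6} applied to $W = F$. So the genuine task is to establish (2)$\Leftrightarrow$(3).

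For (2)$\Rightarrow$(3), I would argue by contrapositive: if $\dim(E/F) = n < \infty$ and $G \subseteq (E\setminus F)\cup\{0\}$ is any subspace, then $G \cap F = \{0\}$, so the quotient map $Q\colon E \to E/F$ restricts to an injection on $G$, forcing $\dim G \le n$. Hence no infinite-dimensional (in particular, no closed infinite-dimensional) subspace can live in $(E\setminus F)\cup\{0\}$, so $E\setminus F$ is not spaceable.

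For (3)$\Rightarrow$(2), I would invoke Theorem~\ref{TheoDrew}: since $F$ is a closed non-minimal subspace of $E$ with $\dim(E/F) = \infty$, there exists a closed non-minimal subspace $G$ of $E$ with $F \cap G = \{0\}$. Being non-minimal and closed, $G$ is infinite-dimensional, and by construction $G \setminus \{0\} \subseteq E \setminus F$. Therefore $E\setminus F$ is spaceable. There is no substantive obstacle here; the only thing to verify is the translation of hypotheses (the Banach setting ensures $F$ is non-minimal), after which Theorems \ref{Teo3.6} and \ref{TheoDrew} do all the work.
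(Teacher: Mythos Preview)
Your argument is correct and is precisely the derivation the paper intends: the corollary is stated in the paper without proof, as an immediate consequence of Theorem~\ref{Teo3.6} together with Theorem~\ref{TheoDrew}, and your write-up supplies exactly those details. The only minor remark is that for the hypothesis of Theorem~\ref{Teo3.6} you do not need that $F$ itself is non-minimal, only that $F$ \emph{contains} a non-minimal closed subspace; the classical result quoted in the introduction (every infinite-dimensional Banach space contains a closed subspace with a basic sequence) already gives this directly, so your appeal to Kalton--Shapiro is sufficient but not strictly necessary.
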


\begin{corollary}
For an $F$-space $E$, if $W$ be a subspace of $E$ that contains a non-minimal
closed subspace,then the following conditions are equivalent:

\begin{enumerate}
\item $E\setminus W$ is spaceable,

\item $E\setminus W$ is $[(x_{n})_{n=1}^{\infty},\mathcal{S}]$-lineable for
some closed subspace $\mathcal{S}$ of $\ell_{\infty}$ containing $c_{0}$ and
some $\mathcal{S}$-independent sequence $(x_{n})_{n=1}^{\infty}$ of elements
of $E$,

\item $E\setminus W$ is $\left(  n,\mathfrak{c}\right)  $-spaceable for each
$n\in\mathbb{N}$.
\end{enumerate}
\end{corollary}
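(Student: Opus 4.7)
The plan is to invoke Theorem \ref{Teo3.6} for the equivalence $(1)\Leftrightarrow(2)$ and to dispose of $(3)\Rightarrow(1)$ almost immediately, since $(n,\mathfrak{c})$-spaceability for even a single $n$ already forces the existence of a closed $\mathfrak{c}$-dimensional subspace inside $(E\setminus W)\cup\{0\}$, which is spaceability. The real content lies in the implication $(1)\Rightarrow(3)$.

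For $(1)\Rightarrow(3)$ I would fix an arbitrary $n$-dimensional $F_{n}\subset(E\setminus W)\cup\{0\}$ and produce a closed $\mathfrak{c}$-dimensional extension of $F_{n}$ inside $(E\setminus W)\cup\{0\}$. The first move is to pass through Theorem \ref{Teo3.6}: (1) gives (2), and replaying the $(2)\Rightarrow(1)$ half of that proof yields a specific closed spaceability witness $G=\overline{\text{span}}\{y_{j}+x_{j}\}_{j=1}^{\infty}$ which is spanned by the \emph{explicit basic sequence} $(y_{j}+x_{j})_{j=1}^{\infty}$. Since $W+F_{n}$ still contains the non-minimal closed subspace inherited from $W$ and $F_{n}\cap W=\{0\}$, the core task becomes: produce a closed infinite-dimensional $G^{\ast}\subset G$ with $G^{\ast}\cap(W+F_{n})=\{0\}$.

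To construct such a $G^{\ast}$, I would set $K:=G\cap(W+F_{n})$, a finite-dimensional subspace of some dimension $m\leq n$, fix a basis $k_{1},\dots,k_{m}$ of $K$, and write each $k_{i}=\sum_{j}c_{ij}(y_{j}+x_{j})$ using the unique basic-sequence expansion in $G$. Linear independence of $k_{1},\dots,k_{m}$ forces the coefficient ``matrix'' $(c_{ij})$ to have rank $m$, so there exist indices $j_{1},\dots,j_{m}$ for which the $m\times m$ minor $(c_{ij_{\ell}})_{i,\ell}$ is invertible. Setting $J=\{j_{1},\dots,j_{m}\}$ and $G^{\ast}=\overline{\text{span}}\{y_{j}+x_{j}:j\in\mathbb{N}\setminus J\}$, uniqueness of basic-sequence expansions together with invertibility of this minor forces $G^{\ast}\cap K=\{0\}$, and therefore $G^{\ast}\cap(W+F_{n})=\{0\}$. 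As the closed linear span of an infinite basic subsequence, $G^{\ast}$ is a closed infinite-dimensional subspace of the $F$-space $E$, and consequently has algebraic dimension $\mathfrak{c}$.

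Finally, I would take $F_{\mathfrak{c}}:=G^{\ast}+F_{n}$: closed as the sum of a closed subspace and a finite-dimensional one, of dimension $\mathfrak{c}$, and containing $F_{n}$. The inclusion $F_{\mathfrak{c}}\subset(E\setminus W)\cup\{0\}$ reduces to a short check: if $g+v\in W$ with $g\in G^{\ast}$ and $v\in F_{n}$, then $g=(g+v)-v\in W+F_{n}$ forces $g\in G^{\ast}\cap(W+F_{n})=\{0\}$, and then $v\in F_{n}\cap W=\{0\}$. The main obstacle will be the construction of $G^{\ast}$: the rank extraction genuinely requires an honest basic sequence spanning $G$, which is why the argument must reach into the \emph{proof} of Theorem \ref{Teo3.6} rather than using only its statement.
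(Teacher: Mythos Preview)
The paper states this corollary without proof, so there is no argument to compare against; your task reduces to whether the proposal stands on its own. It does: the implications $(1)\Leftrightarrow(2)$ and $(3)\Rightarrow(1)$ are handled exactly as you say, and your route for $(1)\Rightarrow(3)$---replaying the proof of Theorem~\ref{Teo3.6} to obtain a witness $G$ that is the closed span of an explicit basic sequence, then excising finitely many basic vectors to dodge the finite-dimensional obstruction $K=G\cap(W+F_{n})$---is sound. The rank-extraction step is legitimate (an $m\times\infty$ matrix with independent rows has an invertible $m\times m$ minor), and the final verification that $G^{\ast}+F_{n}\subset(E\setminus W)\cup\{0\}$ is correct.

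Two small points. First, you assert $\dim K\leq n$ without justification; this is the one place a reader might pause. It follows because the quotient map $W+F_{n}\to(W+F_{n})/W\cong F_{n}$ restricts to an injection on $K$ (kernel $K\cap W\subset G\cap W=\{0\}$). Second, the remark that ``$W+F_{n}$ still contains the non-minimal closed subspace inherited from $W$'' is never actually used in the argument you wrote down; it reads like a vestige of an alternative approach (applying Theorem~\ref{Teo3.6} to $W+F_{n}$ directly) that you abandoned. You can safely delete it, or else note that this alternative route would also work but requires re-establishing condition~(2) for the enlarged subspace, which is no shorter than what you did.
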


\section{Acknowledgements}

We would like to thank Professors Dr. Anselmo R. Jr. and Dr. Fernando V. J. C.
for their valuable contributions during the development of this paper.

\end{document}